\newtheorem{theorem}{Theorem}[section]
\newtheorem{proposition}[theorem]{Proposition}
\newtheorem{definition}[theorem]{Definition}
\newtheorem{remark}[theorem]{Remark}
\newcommand{\Rmn}{\mathbb{R}^{m \times n}}
\DeclareMathOperator*{\argmin}{arg\,min}
\newcommand{\N}{\mathbb{N}}
\DeclareMathOperator*{\minimize}{minimize}
\newcommand{\R}{\mathbb{R}}
\newcommand{\Xs}{X^{\star}}
\algnewcommand{\IfThen}[2]{\State \algorithmicif\ #1\  \algorithmicthen\ #2}
\title{A two-phase rank-based algorithm for low-rank matrix completion}
\author[1]{Tacildo de S. Araújo\thanks{e-mail: tacildo.araujo@ifam.edu.br}}
\author[2]{Douglas S. Gonçalves\thanks{e-mail: douglas@mtm.ufsc.br}}
\author[3]{Cristiano Torezzan\thanks{e-mail: torezzan@unicamp.br}}
\affil[1]{IMECC, University of Campinas, Campinas, Brazil}
\affil[2]{CFM, Federal University of Santa Catarina, Florianópolis, Brazil}
\affil[3]{FCA, University of Campinas, Limeira, Brasil}
\date{}    
\begin{document}
\maketitle

\begin{abstract}
%Matrix completion aims to recover an unknown low-rank matrix from a small subset of its entries. 
%In many applications, the rank of the unknown target matrix is known in advance. 
%In this paper, we propose a two-phase algorithm that leverages the rank information to compute both a suitable value for the regularization parameter and a warm-start for an accelerated Soft-Impute algorithm. 
%Properties inherited from proximal gradient algorithms are exploited to propose a parameter tuning to accelerate the method and also to establish a convergence analysis. Numerical experiments with both synthetic and real data show that the proposed algorithm can recover low-rank matrices, with high precision, faster than other well-established matrix completion algorithms.

Matrix completion aims to recover an unknown low-rank matrix from a small subset of its entries. 
In many applications, the rank of the unknown target matrix is known in advance. 
In this paper, first we revisit a recently proposed rank-based heuristic for ``known-rank'' matrix completion %which keeps the iterates with the correct target rank.  
and establish a condition under which the generated sequence is quasi-Fejér convergent to the solution set. 
Then, by including an acceleration mechanism similar to Nesterov's acceleration, we obtain a new heuristic. 
Even though the convergence of such heuristic cannot be granted in general, it turns out that it can be very useful as a warm-start phase,  
providing a suitable estimate for the regularization parameter and a good starting-point, to an accelerated Soft-Impute algorithm. 
Numerical experiments with both synthetic and real data show that the resulting two-phase rank-based algorithm can recover low-rank matrices, 
with relatively high precision, faster than other well-established matrix completion algorithms.

\end{abstract}
\ \\
\noindent\textbf{Keywords:} Matrix Completion, proximal gradient algorithm, soft-thresholding, recommender systems \\
\ \\

%\classification{91A12, 91A46}

\section{Introduction}\label{sec:intro}

%Low-rank matrices arise naturally in many real-world problems. For example, in data mining applications, it is useful to store data in a two-dimensional frame, a matrix, related to an object, or a phenomenon, which can even be explained by a smaller amount of information, i.e., the matrix may have low rank. Another example is the molecular conformation problem, in which some pairs of atoms are connected, and the distances between them can be estimated using nuclear magnetic resonance experiments \cite{havel1995evaluation}. Other interesting applications include Toeplitz matrix \cite{wen2019toeplitz},  sensor network localization \cite{biswas2006semidefinite}, social networks user interaction \cite{kim2011network, chiang2014prediction} and Euclidean Distance Matrix \cite{liberti2014euclidean, dokmanic2015euclidean}.

The problem of recovering missing entries in a low-rank matrix $A \in \Rmn$ can be formulated in terms of a rank minimization problem as
\begin{equation}\label{rmp}
	\begin{aligned}
		\minimize_{X \in \Rmn} & \quad \text{rank}(X)  \\
		\text{subject to} & \quad P_{\Omega}(X)=P_{\Omega}(A),
	\end{aligned}
\end{equation}
where $ \Omega $ denotes the set of indices of the known entries of $ A $ and $ P_{\Omega}(\cdot) $ is the projection operator, defined as
$$
[P_{\Omega}(X)]_{ij}:=
%\left\lbrace
%\begin{array}{cl}
\begin{cases}
	X_{ij}, & \textrm{if} \ (i,j) \in \Omega \\ 
	0, & \text{otherwise},
\end{cases}
%\end{array}
%\right. 
$$
with $ P_{\Omega}^{\perp}(\cdot) $ defined by $ P_{\Omega}^{\perp}(X)=X- P_{\Omega}(X)$.

Despite its theoretical importance, problem (\ref{rmp}) is non-convex and combinatorially hard for general sets $ \Omega $ \cite{srebro2003weighted}. To overcome such disadvantage, several alternatives have been proposed in the literature \cite{candes2010power, fornasier2011low, tanner2016low}. A common way to swerve the non-convexity in problem  (\ref{rmp}) is to replace the rank objective by a convex relaxation such as the nuclear norm $\|X\|_{\ast}$, as proposed in  \cite{candes2009exact} and \cite{fazel2002matrix}.

The nuclear norm of a matrix is derived from its Singular Value Decomposition (SVD). Let $X = U \Sigma V^\top$ be the SVD of $X \in \mathbb{R}^{m \times n}$ 
and assume that $\sigma_1 \geq \sigma_2 \geq \dots \geq \sigma_{\min\{m,n\}} \geq 0$ are its singular values. The nuclear norm of $X$ is defined as $ \lVert X \rVert_{\ast} := \sum_j \sigma_j $ and it has been used to propose a convex relaxation for problem (\ref{rmp}). 

The methods studied in this paper rely on a deflated version of the SVD decomposition, calculated by using the so-called \textit{Soft-Thresholding} (ST) operator, defined as
\begin{equation}\label{eq:soft}
	S_{\tau} (M) := U \Sigma_{\tau} V^\top, \quad \Sigma_{\tau} = \mbox{diag}[ (\sigma_1 - \tau)_{+},\cdots, (\sigma_q - \tau)_{+}], 
\end{equation}
where $M = U \Sigma V^\top $ is the compact-SVD of a rank $q$ matrix $M$ and $t_{+}=\max(0,t)$. 
It turns out that $S_{\tau} (M)$ is a proximal operator \cite[Theorem~2.1]{cai2010singular} which solves the problem
\[
\min_{X} \dfrac{1}{2}\lVert M-X \rVert_{F}^{2} + \tau \lVert X \rVert_{\ast}. 
\]
In \cite{cai2010singular}, based on the Uzawa's method for finding saddle points of the Lagrangian, the authors present an algorithm, called \textit{Singular Value Thresholding} (SVT), and proved that the sequence generated by 
\begin{align}
X^{k+1} & = S_{\tau} (Y^k)  \\
Y^{k+1} & = Y^k + t_k P_{\Omega} (A - X^{k+1})
\end{align}
where $Y^0 = 0$ and $t_k$ is a step-size, converges to the unique solution of the following optimization problem 
\begin{equation}\label{svt}
	\begin{aligned}
		\minimize_{X \in \Rmn} & \quad  \tau \lVert X \rVert_{\ast}  + \dfrac{1}{2} \lVert X \rVert_{F}^2  \\
		\text{subject to} & \quad  P_{\Omega}(X)=P_{\Omega}(A),
	\end{aligned}
\end{equation}
where $ \tau > 0 $ is a regularization parameter. The component $\tau \lVert X \rVert_{\ast}$ in the objective function is a convex relaxation for $\text{rank}(X)$, while $\frac{1}{2}\lVert X \rVert_{F}^2$ is a strongly convex term granting \eqref{svt} a unique solution.  %used to promote spAlg.~\ref{alg:main}ty. 
Due to its theoretical and computational properties, the SVT algorithm is an important reference for matrix completion and it is often used as a benchmark.

Another alternative formulation for problem (\ref{rmp}) is to consider a tolerance on the recovering of the known entries. This can be particularly useful in applications where data are obtained through noisy processes.
%, as in sensor network \cite{biswas2006semidefinite} or nuclear magnetic resonance \cite{havel1995evaluation}, among other examples. 
In this case, it may be worth to consider the following optimization problem  
\begin{equation}\label{nnmp}
	\begin{aligned}
		\minimize_{X \in \Rmn} & \quad \lVert X \rVert_{\ast}   \\
		\text{subject to} & \quad  \lVert P_{\Omega}(X-A)\rVert_{F}^{2}\leq \delta,
	\end{aligned}
\end{equation}
where $\delta \geq 0 $ is a given recovering error tolerance.

In  \cite{ma2011fixed} and also in \cite{mazumder2010spectral} it is exploited the following Lagrangian formulation for problem (\ref{nnmp}),
\begin{equation}\label{nnm}
	\minimize_{X\in \Rmn} \quad  \dfrac{1}{2}\lVert P_{\Omega}(A)- P_{\Omega}(X)\rVert_{F}^{2} + \lambda \lVert X \rVert_{\ast} =: f_{\lambda}(X),
\end{equation}
where $ \lambda > 0 $ is a regularization parameter. The authors in \cite{ma2011fixed} showed that the sequence produced by
\begin{align}
X^{k+1} & = S_{\lambda t_k} (Y^k)  \\
Y^{k+1} & = X^{k+1} + t_k P_{\Omega} (A - X^{k+1})
\end{align}
converges to a solution of (\ref{nnm}). This iterative process is called \textit{Fixed Point Continuation} (FPC). 
If the step-size is fixed as $t_k=1$, the above iteration reduces to the so-called Soft-Impute (SI) algorithm discussed in \cite{mazumder2010spectral}. 
Both, SI and FPC, rely on a pre-specified decreasing sequence of regularization parameters $\lambda_1 > \dots > \lambda_K$, 
solving \eqref{nnm}, up to a predetermined tolerance, for each value of $\lambda$.  

%Addressing the same formulation (\ref{nnm}), the authors in \cite{mazumder2010spectral} proposed an algorithm, called Soft-Impute (SI), which generates a sequence that converges to a solution for the problem (\ref{nnm}), %for each fixed value of $\lambda>0$, 
%with convergence rate $ O(1/k) $, where $k$ is the number of iterations.

%$X_{k} = S_{\lambda}(Y_{k-1})$ and $ Y_{k} = X_k + P_{\Omega}(A - X_k) $

%Both, the FPC and SI algorithms, rely on a deflated version of the SVD decomposition, calculated by using the \textit{Soft-Thresholding} (ST) operator, defined as
%\begin{equation}\label{eq:soft}
%	S_{\tau} (M) := U \Sigma_{\tau} V^\top, \quad \Sigma_{\tau} = \mbox{diag}[ (\sigma_1 - \tau)_{+},\cdots, (\sigma_q - \tau)_{+}], 
%\end{equation}
%where $M = U \Sigma V^\top $ is the compact-SVD of a rank $q$ matrix $M$ and $t_{+}=\max(0,t)$.
Although they may have different motivations, the FPC and SI algorithms can be seen as particular cases of the proximal gradient method \cite{fista}. This fact was actually used in \cite{yao2018} to derive a convergence analysis for these algorithms and to propose acceleration strategies for the SI. 

Such algorithms, however, are still very sensitive to the choice of the regularization parameter $\lambda$ 
and the parameter tuning process can be quite cumbersome in real applications. 
%(notice that a new optimization problem needs to be solved for each choice of $\lambda$ in \eqref{nnm}).
Another information that is disregarded, or not properly used, by these algorithms is the eventual knowledge of the rank of the target matrix. In some applications, such as in problems involving Euclidean Distance Matrices (EDM), the rank of the matrix to be completed is known in advance. For example, it can be proved that the rank of an EDM derived from a set of points in $\R^{d}$ is at most $d+2$ \cite{dokmanic2015euclidean}. This information might be useful to estimate the parameter $\lambda$ and improve the completion performance.

In \cite{moreira2018novel} the authors take into account the rank information and propose an algorithm called \textit{Fixed-Rank Soft-Impute} (FRSI) to complete missing entries in EDMs using the rank information to estimate the regularization parameter $\lambda$. However, despite the good numerical results, no convergence analysis was provided for FRSI. % and, in some cases, the target rank may not be accurately reached due to numerical instabilities. 

%In this paper, we combine some insights of \cite{parikh2014proximal} and \cite{moreira2018novel}  to propose a two-phase algorithm, named \textit{Accelerated Rank-based Soft-Impute} (Alg.~\ref{alg:main}). In the first phase, an heuristic uses the rank of the target matrix to compute a sequence of values for the regularization parameter $\lambda$ and, as a by-product, a good starting point for the second phase which consists of an accelerated Soft-Impute algorithm \cite{yao2018}, that can be seen as an accelerated proximal gradient method \cite{fista}. The first phase is stopped as soon as the estimates for $\lambda$ stabilize and the convergence of the second phase follows from the properties of accelerated proximal gradient methods. 
%In addition, our approach maintains the ``sparse plus low-rank" structure of the soft-thresholding argument, which is a desired property specially for large scale matrix completion problems.

In this paper, we first revisit the rank-based heuristic proposed in \cite{moreira2018novel} and analyze some properties of the operator defining the iterative process. We show that under an assumption on the behavior of the singular values of the iterates, the sequence generated by such heuristic is quasi-Fejér convergent to the set of matrices with rank not greater than the target rank and that agree with the target matrix in the sampled entries. 
Then, based on acceleration techniques for proximal gradient methods \cite{parikh2014proximal}, we devise an accelerated heuristic.

Even though the convergence of such heuristic cannot be granted in general, it turns out that it can be very useful as a warm-start phase,  
to find a suitable estimate for the regularization parameter $\lambda$ and a good starting-point, to an accelerated Soft-Impute algorithm \cite{yao2018}. 
%(that can be seen as an accelerated proximal gradient method \cite{fista}).
The resulting is a two-phase rank-based algorithm for low-rank matrix completion that presents promising results in numerical experiments with both synthetic and real data.

%In addition, our approach maintains the ``sparse plus low-rank" structure of the soft-thresholding argument, which is a desired property specially for large scale matrix completion problems.

%The first stage can be seen as a warm-start for SI and it was designed to find a suitable value for the lambda parameter and hence a suitable Lagrangian relaxation given the rank of the target matrix and the known entries. The second stage is started when the parameter $\lambda$ has stabilized. That is, it solves a fixed optimization problem without changing the parameter. Thus, it is possible to guarantee its convergence and obtain good numerical results. 

The rest of the paper is organized as follows. 
Section~\ref{sec:frsi} revisits the algorithm proposed in \cite{moreira2018novel} and proves its convergence, in the quasi-Fejér sense, 
under an assumption on the behavior of the singular values of the iterates.  
Since the required assumption is strong, convergence is not granted in general. 
However, in Section~\ref{sec:mcri}, we discuss how such heuristic can be used as a warm-start phase in a two-phase algorihtm: 
the heuristic provides a starting-point and the value for the regularization parameter to be used in an Accelerated Soft-Impute algorithm in the second phase.  
%Section~\ref{sec:conv}, the convergence theory for the second phase of the algorithm is established through the connection between Soft-Impute and the proximal gradient method. 
Section~\ref{sec:numexp} reports some numerical experiments on synthetic and real data and compares the proposed two-phase algorithm with other well-established matrix completion algorithms. Section~\ref{sec:fim} brings some concluding remarks and discuss directions for future investigations.

%It also proposes an acceleration for such algorithm.

\section{Revisiting fixed-rank Soft-Impute (FRSI) and its convergence}\label{sec:frsi}
%\dg{Pessoal, temos que melhorar muito esta seção. Temos que explicar de onde que saiu a ideia da heurística utilizada na fase-1. 
%Talvez aproveitar algum texto do artigo do Nilson. Do jeito que está, a toda hora repetimos ``a informação do posto é utilizada na fase 1 para gerar uma sequência de estimativas para o parâmetro $\lambda$'', mas não explicamos em lugar nenhum o porquê de $\rho_j$ ser uma boa estimativa para $\lambda$ quando $j \rightarrow \infty$, ou algo parecido. Eu tentei algo nos parágrafos abaixo. Vejam se concordam com a justificativa e incluam os comentários que acharem necessário.}

%here are many situations where the rank of a unknown matrix is known in advance. The sensor network localization problem, for instance, can be modeled by an Euclidean distance matrix whose rank is at most $ d+2 $, where $ d $ is the dimension of the Euclidean space \cite{moreira2018novel, nguyen2019localization}. 
Let $A \in \Rmn$ be a matrix with missing entries and rank $r$, which we assume it is known in advance. 
%Since the rank of $ \tilde{A} $ is known, one question arises naturally: can the rank information be useful to improve the matrix completion algorithms performance? To answer this question, we propose an algorithm called Accelerated Rank-based Soft-Impute (Alg.~\ref{alg:main}), that uses the rank information for computing a solution of (\ref{nnm}). 

Let us review FRSI \cite{moreira2018novel} by first recalling the iteration of Soft-Impute (SI) \cite{mazumder2010spectral}. 
According to the notation used in the Introduction, SI can be described as
\begin{equation}\label{eq:si}
	X^k = S_{\lambda}\left(P_{\Omega}(A)+ P_{\Omega}^{\perp}(X^{k-1})\right).
\end{equation}
This iteration can be deduced by applying the proximal gradient method to the compositive convex optimization problem \eqref{nnm}, as discussed in Appendix~\ref{sec:sipg}.

Notice that for an arbitrary value of $\lambda$, there is no reason to expect $X^k$ to have rank $r$. 
However, if we set 
$$
\lambda = \sigma_{r+1} \left(P_{\Omega}(A)+ P_{\Omega}^{\perp}(X^{k-1})\right),
$$
i.e., the $r+1$ largest singular value of $P_{\Omega}(A)+ P_{\Omega}^{\perp}(X^{k-1})$, then from the definition of $S_{\lambda}$ in \eqref{eq:soft} 
it follows that the rank is at most $r$ for each iterate $X^k$. 
This observation motivated the FRSI algorithm proposed in \cite{moreira2018novel}. 
However, no convergence analysis was provided in that paper.

By defining $g(X) = \frac{1}{2}\| P_{\Omega}(X - A) \|_F^2$, we can write FRSI iteration as:
\begin{equation}\label{eq:frsi}
	X^k = S_{\sigma_{r+1} \left( X^{k-1} - \nabla g(X^{k-1}) \right)} \left( X^{k-1} - \nabla g(X^{k-1} ) \right).
\end{equation}
Here, we provide some insights on the convergence of iteration \eqref{eq:frsi}, by analyzing the operator

\[
T(X) := S_{\sigma_{r+1}(X - \nabla g(X))} \left( X - \nabla g(X) \right).
\]
%recalling that  $\nabla g(X) = P_{\Omega}(X - A)$ and that, by assumption, $\text{rank}(A) \leq r$. 
%The FRSI iteration may be written as 
%\[
%X_{k+1} = T(X_k).
%\]

\begin{proposition}
Let $T: \Rmn \rightarrow \Rmn$ be the operator defined above and assume that $\text{rank}(A) = r$. 
Then, the following properties hold. 
\begin{enumerate}[(i)]
\item $T(A) = A$
\item If $B \in \Rmn$ is such that $\text{rank}(B) \leq r$ and $P_{\Omega}(B) = P_{\Omega}(A)$, then $T(B) = B$
\item $T(P_{\Omega}^\perp (A)) = A$
\item  $T(0) = T(P_{\Omega} (A))$
\end{enumerate}
\end{proposition}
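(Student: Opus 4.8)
The plan is to reduce every statement to a single algebraic identity for the argument fed into the soft-thresholding operator. Since $g(X) = \tfrac{1}{2}\|P_{\Omega}(X-A)\|_F^2$, its gradient is $\nabla g(X) = P_{\Omega}(X-A) = P_{\Omega}(X) - P_{\Omega}(A)$, so that
\[
X - \nabla g(X) = P_{\Omega}^{\perp}(X) + P_{\Omega}(A).
\]
I would establish this identity first, since it drives all four parts and makes the connection with the SI iteration \eqref{eq:si} explicit. Alongside it I would record two elementary facts: that $S_{0} = \mathrm{Id}$ (soft-thresholding with zero threshold leaves every singular value unchanged), and that if a matrix $M$ satisfies $\mathrm{rank}(M) \le r$ then $\sigma_{r+1}(M) = 0$, whence $S_{\sigma_{r+1}(M)}(M) = S_{0}(M) = M$.

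For part (i), substituting $X = A$ into the identity gives $A - \nabla g(A) = P_{\Omega}^{\perp}(A) + P_{\Omega}(A) = A$, so $T(A) = S_{\sigma_{r+1}(A)}(A)$; since $\mathrm{rank}(A) = r$ forces $\sigma_{r+1}(A) = 0$, this equals $A$. Part (ii) is the same computation: the hypothesis $P_{\Omega}(B) = P_{\Omega}(A)$ yields $B - \nabla g(B) = P_{\Omega}^{\perp}(B) + P_{\Omega}(B) = B$, and $\mathrm{rank}(B) \le r$ makes the threshold vanish, so $T(B) = B$. For part (iii) I would use idempotency and complementarity of the projections: with $X = P_{\Omega}^{\perp}(A)$ one has $P_{\Omega}^{\perp}(X) = P_{\Omega}^{\perp}(A)$, hence $X - \nabla g(X) = P_{\Omega}^{\perp}(A) + P_{\Omega}(A) = A$, and the argument is again $A$, giving $T(P_{\Omega}^{\perp}(A)) = A$.

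For part (iv) I would compute the argument on both sides separately. Evaluating at $X = 0$ gives $0 - \nabla g(0) = P_{\Omega}(A)$, while evaluating at $X = P_{\Omega}(A)$ gives $P_{\Omega}^{\perp}(P_{\Omega}(A)) + P_{\Omega}(A) = P_{\Omega}(A)$, because $P_{\Omega}^{\perp}P_{\Omega} = 0$. Since the two arguments coincide, the whole expressions—including the threshold $\sigma_{r+1}(\cdot)$—agree, so $T(0) = T(P_{\Omega}(A))$, without ever evaluating $S$. I do not anticipate a genuine obstacle: every part collapses once the identity $X - \nabla g(X) = P_{\Omega}^{\perp}(X) + P_{\Omega}(A)$ is in place, and the only point demanding a moment of care is justifying that $\sigma_{r+1}$ is exactly zero on rank-$\le r$ inputs so that $S$ acts as the identity.
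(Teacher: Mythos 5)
Your proposal is correct and follows essentially the same route as the paper: compute $\nabla g(X)=P_{\Omega}(X-A)$, observe that the argument of the soft-thresholding operator simplifies to $P_{\Omega}^{\perp}(X)+P_{\Omega}(A)$ in each case, and use that the threshold $\sigma_{r+1}$ vanishes on rank-$\le r$ inputs for (i)--(ii) while (iii)--(iv) reduce to showing the arguments coincide. Isolating the identity $X-\nabla g(X)=P_{\Omega}^{\perp}(X)+P_{\Omega}(A)$ up front is a slightly cleaner organization of the same computation the paper carries out case by case.
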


\begin{proof}	
Observe that $\nabla g(X) = P_{\Omega}(X - A)$. (i) Thus, since $\nabla g(A) = 0$, and $\text{rank}(A)=r$, it is straightforward that $T(A) = A$, 
i.e, $A$ is a fixed-point of $T$. The same reasoning applies to a matrix $B$ such that $P_{\Omega}(B) = P_{\Omega}(A)$ and $\text{rank}(B) \leq r$,
proofing (ii).  

Also notice that 
\[
P_{\Omega}^\perp (A) - \nabla g(P_{\Omega}^\perp (A)) = P_{\Omega}^\perp (A) - P_{\Omega} \left( P_{\Omega}^\perp (A) - A  \right) = P_{\Omega}^\perp (A) + P_{\Omega} (A) = A,
\]
and thus $T(P_{\Omega}^\perp (A)) = A$ as well, showing (iii). Finally, since
\[
P_{\Omega} (A) - \nabla g(P_{\Omega} (A)) = P_{\Omega} (A) - P_{\Omega} \left( P_{\Omega} (A) - A \right) = P_{\Omega} (A) = 0 - P_{\Omega} \left( 0 - A \right) = 0 - \nabla g(0),
\]
we conclude (iv): $T(0) = T(P_{\Omega} (A))$.
\end{proof}

Therefore, not only the target matrix $A$ is a fixed-point of $T$ but any other matrix $B$, of rank at most $r$, such that $P_{\Omega}(B) = P_{\Omega}(A)$. 
Perhaps, more surprisingly, is the fact that $T$ also admits fixed-points $X$, such that $P_{\Omega}(X) \ne P_{\Omega}(A)$, as show the next proposition.

\begin{proposition}\label{prop:badX}
Let $X \in \Rmn$ be a matrix of rank at most $r$, with truncated ($r+1$)-SVD $X = U \Sigma V^\top$. 
If $\nabla g(X) = - \gamma U V^\top - U_{\perp} \Sigma_{\perp} V_{\perp}^\top$, 
where the columns of $U_{\perp}$ and $V_{\perp}$ are orthonormal bases for the orthogonal complement of range of $U$ and $V$, respectively, 
and $\gamma >0$ with $\sigma_i^{\perp} < \gamma$, for $i=r+2,\dots, \min \{m,n\}$, then $X = T(X)$. 
\end{proposition}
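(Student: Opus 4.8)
The plan is to compute $Y := X - \nabla g(X)$ explicitly and to exhibit a singular value decomposition of $Y$ from which both the threshold $\sigma_{r+1}(Y)$ and the action of the soft-thresholding operator can be read off directly. Substituting the hypothesized form of $\nabla g(X)$ gives
\[
Y = U\Sigma V^\top + \gamma U V^\top + U_\perp \Sigma_\perp V_\perp^\top = U(\Sigma + \gamma I)V^\top + U_\perp \Sigma_\perp V_\perp^\top .
\]
Since $[\,U \ U_\perp\,]$ and $[\,V \ V_\perp\,]$ are orthogonal matrices by construction, this expression is already a block-diagonal factorization of $Y$ with respect to orthonormal bases, hence an SVD once the diagonal entries are sorted in decreasing order.

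Next I would determine the singular values of $Y$ and, in particular, pin down $\sigma_{r+1}(Y)$. Because $\text{rank}(X) \leq r$, the $(r+1)$-th diagonal entry of $\Sigma$ vanishes, so the block $U(\Sigma + \gamma I)V^\top$ contributes the values $\sigma_1 + \gamma \geq \dots \geq \sigma_r + \gamma \geq \gamma$, while the block $U_\perp \Sigma_\perp V_\perp^\top$ contributes the values $\sigma_{r+2}^\perp, \dots, \sigma_{\min\{m,n\}}^\perp$. Here the hypotheses $\gamma > 0$ and $\sigma_i^\perp < \gamma$ are exactly what is needed: they guarantee that the $r+1$ largest singular values of $Y$ are $\sigma_1 + \gamma, \dots, \sigma_r + \gamma, \gamma$, so that $\sigma_{r+1}(Y) = \gamma$ while every remaining singular value lies strictly below this value.

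Finally I would apply $S_\gamma$ to $Y$ using the definition in \eqref{eq:soft}. Soft-thresholding subtracts $\gamma$ from each singular value and truncates at zero: the first $r$ values return to $\sigma_1, \dots, \sigma_r$, the $(r+1)$-th value $\gamma$ maps to $0$, and each $\sigma_i^\perp < \gamma$ also maps to $0$. The surviving part is precisely $U\,\text{diag}(\sigma_1, \dots, \sigma_r, 0)\,V^\top = U\Sigma V^\top = X$, which yields $T(X) = S_{\sigma_{r+1}(Y)}(Y) = X$.

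The only real subtlety—the step I expect to require the most care—is the second one: verifying the ordering of the singular values of $Y$ so that $\gamma$ is genuinely the $(r+1)$-th largest. This is where the rank assumption (forcing $\sigma_{r+1}(X) = 0$, so that the $(r+1)$-th factor contributes exactly the level $\gamma$) and the inequalities $0 < \sigma_i^\perp < \gamma$ enter in an essential way; without them the threshold $\sigma_{r+1}(Y)$ could differ from $\gamma$ and the soft-thresholding would fail to reproduce $X$. Everything else follows from the orthonormality of the combined bases $[\,U\ U_\perp\,]$ and $[\,V\ V_\perp\,]$ together with the definition of $S_\tau$.
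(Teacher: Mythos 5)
Your proposal is correct and follows essentially the same route as the paper's proof: both compute $X - \nabla g(X) = U(\Sigma + \gamma I)V^\top + U_\perp \Sigma_\perp V_\perp^\top$, observe that the hypotheses force $\sigma_{r+1}(X-\nabla g(X)) = \gamma$, and conclude that soft-thresholding at level $\gamma$ returns exactly $U\Sigma V^\top = X$. Your version merely spells out the ordering of the singular values more explicitly than the paper does.
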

\begin{proof}
Observe that
\[
X - \nabla g(X) = U (\Sigma + \gamma \mbox{I}_{r+1}) V^\top + U_{\perp} \Sigma_{\perp} V_{\perp}^\top,
\]
then, since $\sigma_{r+1}(X - \nabla g(X)) = \gamma > \sigma_i^\perp$, we obtain $T(X) = U \Sigma V^\top =X$. 
\end{proof}

From the above propositions, we see that although the target matrix $A$ is a fixed point of $T$, which is desirable, in general the operator will not have a unique fixed point and more, there may be fixed points $X$ such that $P_{\Omega}(X) \ne P_{\Omega}(A)$. 
Thus, we cannot expect $T$ to be a contraction.

Nevertheless, we shall see that if the sequence of singular values $\sigma_{r+1}(X^k - \nabla g(X^k))$ goes to zero fast enough, then we can prove that the sequence $\{X^k\}$ is quasi-Fejér convergent to the set
\[
{\cal X}^* = \{ X \in \Rmn \mid \text{rank}(X) \leq r, P_{\Omega}(X) = P_{\Omega}(A) \}. 
\]

\begin{definition}\label{def:qfejer}
A sequence $\{ X^k \}$ in $\Rmn$ is quasi-Fejér convergent to $C \subset \Rmn$ if, for each $\Xs \in C$, there exists a non-negative summable sequence $\{ \varepsilon_k \}$ such that
\[
\| X^k - \Xs \| \leq \| X^{k-1} - \Xs \| + \varepsilon_k, \quad k=1,2,\dots 
\]
\end{definition}

\begin{proposition}\label{prop:qfejer}
Let $C \subset \Rmn$ be a nonempty set and $\{ X^k \}$ a quasi-Fejér sequence convergent to $C$. Then, 
\begin{enumerate}[(i)]
\item $\{X^k \}$ is bounded.
\item If $\{X^k \}$ has a cluster point $\bar{X} \in C$, then the whole sequence $\{X^k \}$ converges to $\bar{X}$.
\end{enumerate}
\end{proposition}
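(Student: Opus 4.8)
The plan is to treat the two items separately, in both cases exploiting the defining inequality of Definition~\ref{def:qfejer}. First I would prove (i). Since $C$ is nonempty, fix any $\Xs \in C$ and let $\{\varepsilon_k\}$ be the associated summable sequence. Iterating the inequality $\|X^k - \Xs\| \leq \|X^{k-1} - \Xs\| + \varepsilon_k$ from index $k$ down to the initial index yields
\[
\|X^k - \Xs\| \leq \|X^0 - \Xs\| + \sum_{j=1}^{k} \varepsilon_j \leq \|X^0 - \Xs\| + \sum_{j=1}^{\infty} \varepsilon_j =: M,
\]
and $M < \infty$ precisely because $\{\varepsilon_k\}$ is summable. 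Hence every iterate lies in the ball of radius $M$ centered at $\Xs$, which proves boundedness.

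For (ii), the key preliminary step is to show that the scalar sequence $a_k := \|X^k - \bar X\|$ converges. Because $\bar X \in C$, Definition~\ref{def:qfejer} applies with $\Xs = \bar X$, giving a summable $\{\varepsilon_k\}$ with $a_k \leq a_{k-1} + \varepsilon_k$. Since $\{a_k\}$ need not be monotone, I would introduce the auxiliary sequence
\[
t_k := a_k + \sum_{j=k+1}^{\infty} \varepsilon_j,
\]
whose tail sum is finite and vanishes as $k \to \infty$. A one-line computation, substituting $a_k \leq a_{k-1} + \varepsilon_k$, shows $t_k \leq t_{k-1}$, so $\{t_k\}$ is non-increasing and bounded below by $0$; it therefore converges to some $\ell \geq 0$. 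As the tail $\sum_{j>k} \varepsilon_j \to 0$, it follows that $a_k = t_k - \sum_{j>k}\varepsilon_j$ also converges to the same limit $\ell$.

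Finally I would use the cluster-point hypothesis to identify $\ell$. By assumption there is a subsequence $\{X^{k_i}\}$ with $X^{k_i} \to \bar X$, so $a_{k_i} \to 0$; combined with $a_k \to \ell$ this forces $\ell = 0$. Therefore $\|X^k - \bar X\| \to 0$, i.e., the whole sequence converges to $\bar X$.

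The only genuinely delicate point is the passage from the boundedness-type estimate to the \emph{full} convergence of $\{a_k\}$: the defining inequality permits $a_k$ to increase at each step, so monotone convergence cannot be invoked directly. The construction of the non-increasing majorant $t_k$, which folds the summable increments into a vanishing tail, is the crux of the argument; once convergence of $\{a_k\}$ is secured, pinning the limit to $0$ via the convergent subsequence is immediate.
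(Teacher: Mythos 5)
Your proof is correct and complete. The paper itself does not supply an argument for this proposition; it simply cites \cite[Proposition~1]{Iusem2003}, where the standard proof is essentially the one you give: telescoping the defining inequality for boundedness, and for part (ii) constructing the non-increasing majorant $t_k = \|X^k - \bar X\| + \sum_{j>k}\varepsilon_j$ to obtain convergence of $\|X^k - \bar X\|$ before identifying the limit as $0$ via the convergent subsequence. Your identification of the majorant construction as the crux is exactly right --- that is the step that circumvents the non-monotonicity of $\|X^k-\bar X\|$.
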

\begin{proof}
See \cite[Proposition~1]{Iusem2003}. 
\end{proof}

\begin{theorem}\label{teo:qfejer}
Let $\{ X^k \}$ be the sequence generated by $X^k = T(X^{k-1})$, with $X^0 \in \Rmn$. 
If the sequence $\{ \sigma_{r+1}(X^k - \nabla g(X^k)) \}$ is summable, then $\{ X^k \}$ is quasi-Fejér convergent to the set ${\cal X}^*$.
\end{theorem}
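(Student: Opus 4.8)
The plan is to exploit two structural facts and to let the summability hypothesis account for the one thing that breaks down. The gradient step is nonexpansive and fixes every element of ${\cal X}^*$, and the soft-thresholding operator at a \emph{fixed} threshold is nonexpansive; the only obstruction to $T$ itself being nonexpansive is that the threshold $\tau_k := \sigma_{r+1}(X^k - \nabla g(X^k))$ changes from one iterate to the next. The whole point of assuming $\{\tau_k\}$ summable is to absorb this variation into the error sequence $\{\varepsilon_k\}$ of Definition~\ref{def:qfejer}.

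First I would rewrite the gradient step in closed form. Since $\nabla g(X) = P_{\Omega}(X-A)$, the map $G(X) := X - \nabla g(X) = P_{\Omega}^{\perp}(X) + P_{\Omega}(A)$ is affine with linear part $P_{\Omega}^{\perp}$, an orthogonal projection; hence $\|G(X) - G(Y)\|_F = \|P_{\Omega}^{\perp}(X-Y)\|_F \leq \|X-Y\|_F$, so $G$ is nonexpansive. Moreover, any $\Xs \in {\cal X}^*$ satisfies $P_{\Omega}(\Xs) = P_{\Omega}(A)$, so $G(\Xs) = P_{\Omega}^{\perp}(\Xs) + P_{\Omega}(\Xs) = \Xs$; that is, every point of the target set is a fixed point of $G$.

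Next, I would fix $\Xs \in {\cal X}^*$ and write $Z^k := G(X^k)$, so that $X^{k+1} = S_{\tau_k}(Z^k)$. Inserting the term $S_{\tau_k}(\Xs)$ and using the triangle inequality gives
\[
\|X^{k+1} - \Xs\|_F \leq \|S_{\tau_k}(Z^k) - S_{\tau_k}(\Xs)\|_F + \|S_{\tau_k}(\Xs) - \Xs\|_F.
\]
For the first term I would invoke the nonexpansiveness of the proximal map $S_{\tau_k}$ at the \emph{fixed} threshold $\tau_k$ (it is the proximal operator of $\tau_k\|\cdot\|_{\ast}$, cf.\ \cite[Theorem~2.1]{cai2010singular}), together with the nonexpansiveness of $G$ and $G(\Xs)=\Xs$, to obtain $\|S_{\tau_k}(Z^k) - S_{\tau_k}(\Xs)\|_F \leq \|Z^k - \Xs\|_F = \|G(X^k) - G(\Xs)\|_F \leq \|X^k - \Xs\|_F$. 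The key estimate is the second term: since $\text{rank}(\Xs) \leq r$, writing $\Xs = U^*\Sigma^*(V^*)^\top$ the operator $S_{\tau_k}$ only shrinks the at most $r$ nonzero singular values, so $S_{\tau_k}(\Xs) - \Xs = U^*\,\mathrm{diag}[-\min(\sigma_i^*,\tau_k)]\,(V^*)^\top$ and, by unitary invariance of the Frobenius norm, $\|S_{\tau_k}(\Xs) - \Xs\|_F = \big(\sum_{i=1}^r \min(\sigma_i^*,\tau_k)^2\big)^{1/2} \leq \sqrt{r}\,\tau_k$. Combining the two bounds yields
\[
\|X^{k+1} - \Xs\|_F \leq \|X^k - \Xs\|_F + \sqrt{r}\,\tau_k,
\]
and setting $\varepsilon_{k+1} := \sqrt{r}\,\sigma_{r+1}(X^k - \nabla g(X^k))$ produces a nonnegative sequence whose summability is exactly the hypothesis. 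This is precisely the quasi-Fejér inequality relative to the arbitrary $\Xs \in {\cal X}^*$, finishing the argument.

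The hard part will be handling the varying threshold correctly: one must resist comparing $S_{\tau_k}$ with $S_{\tau_{k-1}}$ across steps and instead keep the threshold fixed \emph{within} each step, pushing the entire discrepancy onto $\|S_{\tau_k}(\Xs) - \Xs\|_F$. The rank bound $\text{rank}(\Xs)\leq r$ is what keeps this discrepancy finite and proportional to $\tau_k$; without it the factor would involve all $\min\{m,n\}$ singular values and the error term could fail to be summable, so the hypothesis $\text{rank}(A)=r$ (equivalently, the structure of ${\cal X}^*$) is genuinely used here.
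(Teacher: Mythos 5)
Your proposal is correct and follows essentially the same route as the paper's proof: split via the triangle inequality around $S_{\tau_k}(\Xs)$, use nonexpansiveness of the fixed-threshold soft-thresholding together with the fact that the gradient step acts as $P_{\Omega}^{\perp}$ on $X^k-\Xs$, and bound the residual term by $\sqrt{r}\,\tau_k$ using $\mathrm{rank}(\Xs)\leq r$. The only cosmetic difference is that the paper writes the residual as $\|S_{\tau_k}(\Xs)-S_{\sigma_{r+1}(\Xs)}(\Xs)\|$ (with $\sigma_{r+1}(\Xs)=0$) whereas you write $\|S_{\tau_k}(\Xs)-\Xs\|$ directly; the bounds are identical.
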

\begin{proof}
Let $\Xs \in {\cal X}^*$. Consider the notation $\|\cdot\| = \|\cdot\|_F$,  $\sigma_{r+1}^k = \sigma_{r+1}(X^k - \nabla g(X^k))$ and $\sigma_i^{\star} = \sigma_i(\Xs)$. 
Then, 
\begin{align}
\| X^{k+1} - \Xs \| & = \| T(X^k) - T(\Xs) \| \nonumber \\ 
\ & = \| S_{\sigma_{r+1}(X^k - \nabla g(X^k))}(X^k - \nabla g(X^k)) - S_{\sigma_{r+1}(\Xs)}(\Xs) \| \nonumber \\
\ & \leq \| S_{\sigma_{r+1}(X^k - \nabla g(X^k))}(X^k - \nabla g(X^k)) - S_{\sigma_{r+1}(X^k - \nabla g(X^k))}(\Xs) \|  \nonumber \\
\ & \quad + \| S_{\sigma_{r+1}(X^k - \nabla g(X^k))}(\Xs) - S_{\sigma_{r+1}(\Xs)}(\Xs) \| \nonumber \\
\ & \leq \| X^k - \nabla g(X^k) - \Xs \| + \left( \sum_{i=1}^r ( (\sigma_i^{\star} - \sigma_{r+1}^k)_{+} - \sigma_i^{\star} )^2 \right)^{1/2} \nonumber \\
\ & \leq \| P_{\Omega}^\perp (X^k - \Xs)  \| + \sqrt{r} \sigma_{r+1}^k  \leq \| X^k - \Xs  \| + \sqrt{r} \sigma_{r+1}^k \label{eq:fejer} 
%\ & = \| P_{\Omega}^\perp (X^k - \Xs)  \| + \sqrt{r} | \sigma_{r+1}^k - 0 | \nonumber \\
%\ & \leq \| P_{\Omega}^\perp (X^k - \Xs)  \| + \sqrt{r} \| X^k - \nabla g(X^k) - \Xs \|_2 \label{eq:weyl} \\
%\ & = \| P_{\Omega}^\perp (X^k - \Xs) \| + \sqrt{r} \| P_{\Omega}^\perp (X^k - \Xs) \|_2 \nonumber 
%\ & \leq (\sqrt{r}+1) \| P_{\Omega}^\perp (X^k - \Xs) \|. \nonumber
\end{align}
where we have used the triangle inequality and the nonexpansive property of $S_{\lambda}(\cdot)$, with $\lambda = \sigma_{r+1}(X^k - \nabla g(X^k))$ fixed. 
%The second last inequality follows from Weyl's inequality. 
Hence, if the sequence $\{ \sigma_{r+1}^k \}$ is summable, then $\{ X^k \}$ is quasi-Fejér convergent to ${\cal X}^*$.
\end{proof}

Therefore, as long as
\begin{equation}\label{eq:condsum}
\sum_{k=0}^{\infty} \sigma_{r+1}(X^k - \nabla g(X^k))  < \infty,
\end{equation}
$\{X^k \}$ generated by \eqref{eq:frsi} will be quasi-Fejér convergent to ${\cal X}^*$ and, according to Proposition~\ref{prop:qfejer}(ii), if it has a cluster point in this set, the whole sequence will converge to it. Unfortunately, condition \eqref{eq:condsum} is admittedly strong, and does not hold in general. 
For this reason, FRSI, defined by iteration \eqref{eq:frsi}, should be regarded as an heuristic.

\section{A two-phase rank-based algorithm}\label{sec:mcri}

Although the iterative process \eqref{eq:frsi} may not converge to a matrix in ${\cal X}^*$, 
here we propose to use it, for a fixed number of iterations, as a ``warm-start'' phase to obtain a good starting-point 
and an estimate to the regularization parameter $\lambda$ (see problem \eqref{nnm}) before applying the Soft-Impute algorithm (see \eqref{eq:si}). 

%\tred{
This is motivated by our numerical experience with the iterative process \eqref{eq:frsi}: 
we observed that when $\{ X^k \}$ does not converge to an element in ${\cal X}^*$, 
it usually converges to an $\tilde{X}$ as in Proposition~\ref{prop:badX} which, although $P_{\Omega}(\tilde{X}) \ne P_{\Omega}(A)$, 
is such that $\| \tilde{X} \|_* < \| A \|_*$, suggesting $\tilde{X}$ as a minimizer of $\frac{1}{2}\| P_{\Omega} (X) -  P_{\Omega}(A) \|_F^2 + \lambda \|X\|_*$ 
for an appropriate value of $\lambda>0$.
%}
%Based on this observation and inspired by accelerated versions of the proximal gradient method \cite{fista}, 

First, inspired by accelerated versions of the proximal gradient method \cite{fista}, 
we include an acceleration for FRSI heuristic, resulting in Algorithm~\ref{alg:phase1}. 
This warm-start phase will be called {\it Phase One}. 

%it resembles an accelerated Soft-Impute where the regularization parameter changes from one iteration to another. 

\begin{algorithm}
	\caption{Phase One: Warm-Start}\label{alg:phase1}
	Input: Known entries of $ A \in \Rmn$ indexed by $ \Omega $, rank $r$, $\epsilon > 0$,  $w \in \N$, and $ \beta > 0$.  
	\begin{algorithmic}[1]
		\State Initialize $X^{0}=0$, $ Z^{1}=0$ and $\rho_0 = \infty$ 
		\For {$ j=1 $ to $ w $ }
		\State Compute the truncated $(r+1)$-SVD of $ P_{\Omega}(A)+ P_{\Omega}^{\perp}(Z^{j})$
		\State Set $\rho_j = \sigma_{r+1}(P_{\Omega}(A)+ P_{\Omega}^{\perp}(Z^{j})) $ \label{step:prox1}
		\IfThen{$\lvert\rho_j - \rho_{j-1}\rvert/(1 + \rho_{j-1})< \epsilon_{\rho}$}{exit.} \label{step:stable}
		\State Compute $ X^{j} \leftarrow S_{\rho_j}\left(P_{\Omega}(A)+ P_{\Omega}^{\perp}(Z^{j})\right)$
		\State $ Z^{j+1} \leftarrow X^{j} + \dfrac{j-1}{j+\beta} \left(X^{j}-X^{j-1}\right)$ \label{step:acc1}
		%\State $ X^{j-1} \leftarrow X^{j} $
		\EndFor
%		\State \textit{\textbf{Accelerated Soft-Impute}}
%		\State Set $\lambda = \rho_j$
%		\For {$ k= j + 1$  to $ it_{\max} $ }
%		\State Compute $ X^{k} \leftarrow S_{\lambda}\left(P_{\Omega}(A)+ P_{\Omega}^{\perp}(Z^{k})\right)$ \label{step:prox2}
%		%\IfThen{ $ \lVert Z^{k}- X^{k}\rVert_{F}/\lVert Z^{k} \rVert_{F} < \epsilon $ }{exit.}
%		\IfThen{some stopping criterion is verified}{stop.}
%		\State $ Z^{k+1} \leftarrow X^{k} + \dfrac{k-1}{k+2} \left(X^{k}-X^{k-1}\right)$ \label{step:acc2}
%		%\State $ X^{k-1} \leftarrow X^{k} $
%		\EndFor		 
	\end{algorithmic} 
	Output: $Z^j, \rho_j$
\end{algorithm} 

Phase one runs for a pre-specified number $w$ of iterations or until the values of $\rho_j = \sigma_{r+1}(P_{\Omega}(A)+ P_{\Omega}^{\perp}(Z^{j}))$ stabilize. The last value of $\rho_j$ from phase one is used as regularization parameter $\lambda$ for the second phase, which consists of an accelerated Soft-Impute algorithm for problem \eqref{nnm}, starting from $Z^{j+1}$. {\it Phase Two} is described in Algorithm~\ref{alg:phase2}.

%These ideas are summarized in Algorithm~\ref{alg:main}, called Accelerated Rank-based Soft-Impute (Alg.~\ref{alg:main}) algorithm. 

\begin{algorithm}
	\caption{Phase Two: Accelerated Soft-Impute}\label{alg:phase2}
	Input: Known entries of $ A \in \Rmn$ indexed by $ \Omega $, rank $r$, $\epsilon > 0$,  $it_{\max} \in \N$, $\lambda > 0$ and $X^0 \in \Rmn$
	\begin{algorithmic}[1]
		\State Initialize $Z^{1}=X^0$
%		\State \textit{\textbf{Warm-start}}
%		\For {$ j=1 $ to $ w $ }
%		\State Compute the truncated $(r+1)$-SVD of $ P_{\Omega}(A)+ P_{\Omega}^{\perp}(Z^{j})$
%		\State Set $\rho_j = \sigma_{r+1}(P_{\Omega}(A)+ P_{\Omega}^{\perp}(Z^{j})) $ \label{step:prox1}
%		\IfThen{$|\rho_j - \rho_{j-1}|/(1 + \rho_{j-1})< \epsilon_{\rho}$}{exit.} \label{step:stable}
%		\State Compute $ X^{j} \leftarrow S_{\rho_j}\left(P_{\Omega}(A)+ P_{\Omega}^{\perp}(Z^{j})\right)$
%		\State $ Z^{j+1} \leftarrow X^{j} + \dfrac{j-1}{j+\beta} \left(X^{j}-X^{j-1}\right)$ \label{step:acc1}
%		%\State $ X^{j-1} \leftarrow X^{j} $
%		\EndFor
		\For {$ k= 1$  to $ it_{\max} $ }
		\State Compute $ X^{k} \leftarrow S_{\lambda}\left(P_{\Omega}(A)+ P_{\Omega}^{\perp}(Z^{k})\right)$ \label{step:prox2}
		%\IfThen{ $ \lVert Z^{k}- X^{k}\rVert_{F}/\lVert Z^{k} \rVert_{F} < \epsilon $ }{exit.}
		\IfThen{some stopping criterion is verified}{stop.}
		\State $ Z^{k+1} \leftarrow X^{k} + \dfrac{k-1}{k+2} \left(X^{k}-X^{k-1}\right)$ \label{step:acc2}
		%\State $ X^{k-1} \leftarrow X^{k} $
		\EndFor		 
	\end{algorithmic} 
	Output: $X^{k}$
\end{algorithm} 

\begin{remark}
Differently from Phase one, where a truncated ($r+1$)-SVD was sufficient to evaluate the thresholding operator (because the threshold value was exactly the $r+1$ largest singular value of $P_{\Omega}(A)+ P_{\Omega}^{\perp}(Z^{k})$), in Phase Two the value of $\lambda$ is fixed and may be different from  $\sigma_{r+1} (P_{\Omega}(A)+ P_{\Omega}^{\perp}(Z^{k}))$. As a result, we need to keep an estimate of the rank $r_k$, which is updated in each iteration (starting with $r_1=r$). We compute a truncated ($r_k+1$)-SVD of $P_{\Omega}(A)+ P_{\Omega}^{\perp}(Z^{k})$. If the $r_k+1$ singular value is already below the threshold $\lambda$, we keep the rank estimate $r_k$. Otherwise, we increase $r_k$ (to $r_k + 5$, for example) and repeat the truncated SVD. Finally, $r_{k+1}$ is set to the number of positive shifted singular values after the last truncated SVD. A similar scheme was used in \cite{cai2010singular}. 
\end{remark}

\begin{algorithm}
	\caption{Two-phase rank-based algorithm}\label{alg:main}
	Input: Known entries of $ A \in \Rmn$ indexed by $ \Omega $, rank $r$, $\epsilon > 0$,  $w, it_{\max} \in \N$, and $ \beta > 0$.  
	\begin{algorithmic}[1]
%		\State \textit{\textbf{(Warm-start)}}
		\State Call Algorithm~\ref{alg:phase1} passing $A$, $\Omega$, $r$, $\epsilon>0$, $w$ and $\beta>0$ \hfill  $\triangleright$ {\it Warm-start}
		\State Set $\lambda = \rho_j$, $X^0 = Z^j$
%		\State \textit{\textbf{(Accelerated Soft-Impute)}}
		\State Call Algorithm~\ref{alg:phase2} passing $A$, $\Omega$, $r$, $\epsilon>0$, $it_{\max}$, $\lambda$ and $X^0$ \hfill  $\triangleright$ {\it Accelerated Soft-Impute}
	\end{algorithmic} 
	Output: $X^{k}$
\end{algorithm} 

%%%We remark that the first phase of Algorithm~\ref{alg:main} is {\it not} an accelerated proximal gradient algorithm because the parameter $\rho_j$ used in the proximal operator (Soft-Thresholding) changes at each iteration but the step-size remains constant. Thus, the convergence analysis from the prox-gradient literature does not directly applies and the warm-start phase must be regarded as an heuristic. 
%Nevertheless, we have empirical evidence that the sequence $\{\rho_j\}$ is non-increasing and often stabilizes in less than a few hundreds of iterations for the instances considered in this paper. 

%Algorithm 1 consists of two stages. In the first stage (Warm-start), we  introduce two heuristic approaches: (a) we use the rank information to set the value of the regularization parameter $ \lambda $ (and consequently a warm-start for the accelerated SI) and (b) inspired by the accelerated versions of the proximal gradient method\footnote[1]{Note that the first stage does not correspond to a proximal gradient method for $ \beta = 2 $, because $ \rho $ changes at each iteration.}, we introduce the line 7. 

Algorithm~\ref{alg:main} summarizes the two-phase rank-based algorithm which uses Algorithm~\ref{alg:phase1} as a warm-start phase (Phase One) and then calls an Accelerated Soft-Impute (Algorithm~\ref{alg:phase2}) in the second phase. As we will see in the numerical experiments of Section~\ref{sec:numexp}, Algorithm~\ref{alg:main} not only outperforms a previous Fixed-Rank Soft-Impute algorithm \cite{moreira2018novel}, but is also competitive with well-established algorithms for low-rank matrix completion.

Furthermore, Algorithm~\ref{alg:main} has granted convergence to a solution of 
\begin{equation}\label{eq:theproblem}
\min_{X \in \Rmn} \quad \frac{1}{2}\| P_{\Omega} (X) -  P_{\Omega}(A) \|_F^2 + \rho_j \|X\|_*
\end{equation}
(where $\rho_j$ is output of Phase One), because Phase One runs for a finite number of iterations and Phase Two is an accelerated proximal gradient method applied to \eqref{eq:theproblem} (see Appendix~\ref{sec:sipg}).

%Furthermore, as we will see in the numerical experiments of Section~\ref{sec:numexp}, this heuristic warm-start lead to a reduction in the total number of iterations (from both phases) with respect to a previous Fixed-Rank Soft-Impute algorithm \cite{moreira2018novel}. 
%The second stage (Accelerated Soft-Impute) corresponds to an accelerated proximal gradient method as explained in the next subsection.

%Our approach computes $ \lambda $ iteratively, and for each iteration, the output is a matrix of rank $ r $, i.e., we approximate $ \tilde{A} $ by a sequence of matrices of rank $ r $. This heuristic proved to be very efficient. Secondly, several empirical experiments showed that a suitable choice of the parameter $ \beta $ improve in more than $ 85\% $ the algorithm's performance, which is an outstanding result.

%Stopping criteria for the second phase of Alg.~\ref{alg:main} shall be discussed in the following section. 

\section{Numerical results}\label{sec:numexp}
In this section, we perform matrix completion experiments with both, synthetic data and the \textit{MovieLens}\footnote[2]{A data set which has been widely used in matrix completion experiments and is available in https://grouplens.org/datasets/movielens/.} data set. 
Moreover, we also provide an empirical study for choosing the acceleration parameter $\beta$ in Phase One (Algorithm~\ref{alg:phase1}). 

All the algorithms were implemented in Matlab language and all the numerical results were performed on a PC with Intel Core i7-7500U CPU and 16 GB RAM.

The proposed Algorithm~\ref{alg:main} is compared with those mentioned in Section~\ref{sec:intro}: FRSI, SVT and FPC. 
All these methods use PROPACK package \cite{larsen1998lanczos} (more specifically, the routine {\tt lansvd} which implements a variant of Lanczos algorithm designed for large matrices with sparse plus low-rank structure) for computing only the leading singular values/vectors. 

%There are some ways to set the stopping criteria for the two stages of Alg.~\ref{alg:main}. We empirically set $ \frac{|\rho_{j}-\rho_{j+1}|}{1+\rho_{j}} \leq \epsilon_{\rho}  $ and
Concerning the stopping criteria for Algorithm~\ref{alg:phase2}, we set 
$$ 
\min \left\lbrace \frac{\lvert f_{\lambda}(X^{k})-f_{\lambda}(X^{k+1})\rvert}{f_{\lambda}(X^{k})}, \frac{\lVert X^{k+1}-X^{k} \rVert_{F}}{\lVert X^{k} \rVert_{F}} \right\rbrace \leq \epsilon_{\lambda},
$$
for a given tolerance $\epsilon_\lambda > 0$ and $f_{\lambda}$ is from \eqref{nnm}. 
%The rationale behind the first choice is that $ (\rho_{j}) $ is empirically non-increasing and $ \rho_{j+1} $ and $ \rho_{j} $ get closer as $ j $ increases, therefore, a natural way to start the second stage is when two consecutive terms in this sequence has small variations. 
For FRSI algorithm we use 
$$
\min \left\lbrace \frac{\lVert P_{\Omega}\left(X^{k}-A\right)\rVert_{F}}{\lVert P_{\Omega} \left(A\right) \rVert_{F}}, \frac{\lVert X^{k+1}-X^{k} \rVert_{F}}{\lVert X^{k} \rVert_{F}} \right\rbrace \leq \epsilon_{1} $$ as the stopping criterion and for SVT and FPC algorithms we follow the recommendations in \cite{cai2010singular} and \cite{ma2011fixed}, and use $ \frac{\lVert P_{\Omega}\left(X^{k}-A\right)\rVert_{F}}{\lVert P_{\Omega} \left(A\right) \rVert_{F}} \leq \epsilon_{2} $, and $ \frac{\lVert X^{k+1}-X^{k}\rVert_{F}}{\max \left\lbrace 1,\lVert X^{k} \rVert_{F}\right\rbrace } \leq \epsilon_{3} $ as the stopping criterion, respectively. 

The following procedure were used for generating the synthetic data set: we generated $ n \times n $ matrices of rank $ r \ll n $ of the form $ A = MN \in \mathbb{R}^{n \times n} $, where the entries of $ M \in \mathbb{R}^{n \times r} $ and $ N \in \mathbb{R}^{r \times n} $ are sampled i.i.d from the standard normal distribution. Then, we deleted, uniformly at random, a percentage $ p $ of entries (unobserved entries) of $ A $.

Before we present some numerical results for both synthetic data and MovieLens, 
we shall give an overview of how to set the parameter $ \beta $ in Algorithm~\ref{alg:phase1}. 

\subsection{Tuning the parameter $\beta$}\label{sec:beta}
%In our experiments, we generated $ n \times n $ matrices of rank $ r \ll n $ of the form $ A = MN \in \mathbb{R}^{n \times n} $, where the entries of $ M \in \mathbb{R}^{n \times r} $ and $ N \in \mathbb{R}^{r \times n} $ are sampled i.i.d from the standard normal distribution. In all simulations, we deleted, uniformly at random, a percentage $ p $ of entries (unobserved entries) of $ A $
%that considers a parameter $ \beta \in \mathbb{N} $ which depends of the choice of $ r, \epsilon $ and $ p $ (percentage of missing data)
To assess the sensitivity of Algorithm~\ref{alg:phase1} to the parameter $\beta$, we performed extensive numerical experiments on the synthetic data set. 
We set a budget of $w = 1,000$ iterations and vary the problem dimension $n$, the rank $r$, percentage of missing data $p$ and the tolerance $\epsilon_\rho$. The experiments show that the number of iterations of Algorithm~\ref{alg:phase1} (Phase One) can be highly reduced by a suitable choice of the parameter $ \beta $, mainly when the number of observed entries is very small. 

Figure \ref{figure1and2} (a) shows the optimal value for $ \beta $ considering the percentage of missing data $ p \in \left\lbrace 92\%, 85\%, 72\%, 50\% \right\rbrace $, $ n = 1000 $, $ r = 5 $, and $ \epsilon_{\rho} = 10^{-8} $. As can be seen, for $ \beta \geq 19 $, Algorithm~\ref{alg:phase1} reaches the minimum number of  iterations in the four scenarios. Furthermore, for $ p = 92\% $ the number of iterations is reduced by 79\% with respect to $\beta=2$ (the default value). On the other hand, for  $ n \in \left\lbrace 500, 1000, 2000, 4000 \right\rbrace $, $ r = 5$, $ \epsilon_{\rho} = 10^{-8} $, and $ p = 40\%, $ Figure~\ref{figure1and2} (b) shows the minimum number of iterations in all scenarios for the same value of $ \beta $ ($ \beta \geq 20 $). 

\begin{figure}
	\subfloat[\label{Aa}]{ \includegraphics[width=0.48\textwidth]{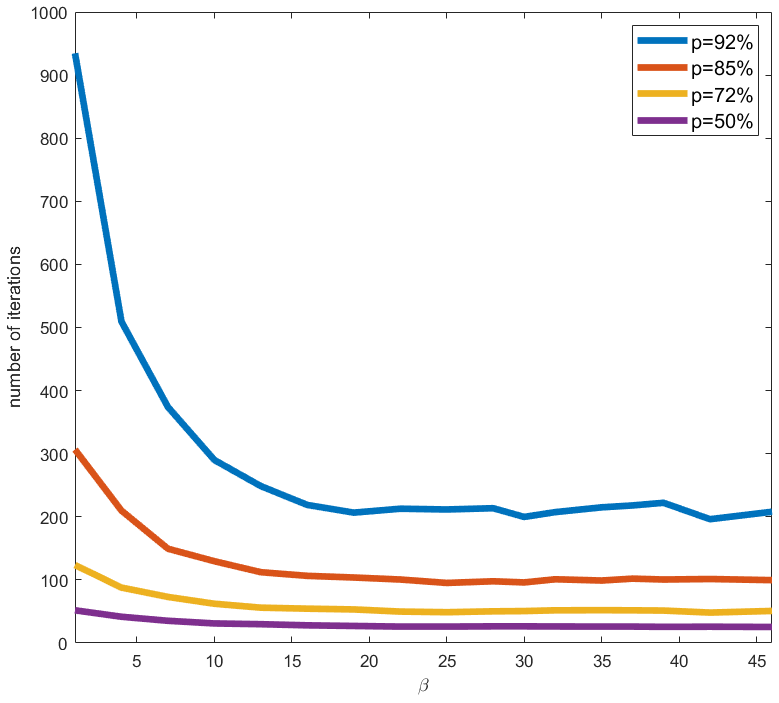}
	}\hfill
	\subfloat[\label{Ab}]{ \includegraphics[width=0.48\textwidth]{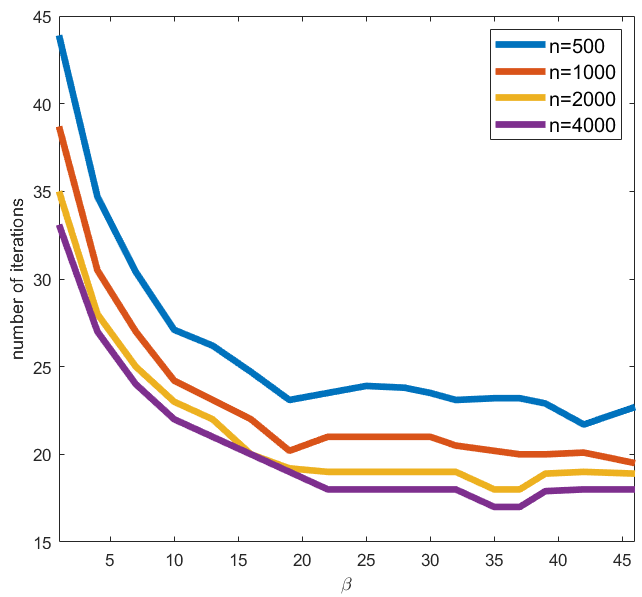}
	}
	\caption{Number of iterations vs $ \beta $: (a) $ n = 1000 $, $ r=5 $, $ \epsilon_{\rho} = 10^{-8} $ and $ p \in \left\lbrace 92\%, 85\%, 72\%, 50\% \right\rbrace$; (b) $ n \in \left\lbrace 500, 1000, 2000, 4000 \right\rbrace $, $ r = 5$, $ \epsilon_{\rho} = 10^{-8} $, and $ p = 40\%$.} 		
	\label{figure1and2}
\end{figure}

Figure~\ref{figura3} gives us an overview of how to set the value of $ \beta $ as the rank $r$ varies. For this experiment, we fixed $ \left(n, \epsilon_{\rho}, p \right) = \left(1000, 10^{-5}, 50\%  \right) $ and $ r \in \left\lbrace 3, 5, 10, 30, 50, 80, 100 \right\rbrace $. As can be seen, the bigger is rank of the target matrix the smaller is the ``optimal'' value of $ \beta $. %Extensive experiments show that the value of $ \beta $ decreases as the rank of the target matrix increases. 
%Therefore, one can set a small value of $ \beta $ for a bigger rank and vice-versa.

\begin{figure}
	\centering
	\includegraphics[width=.7\textwidth]{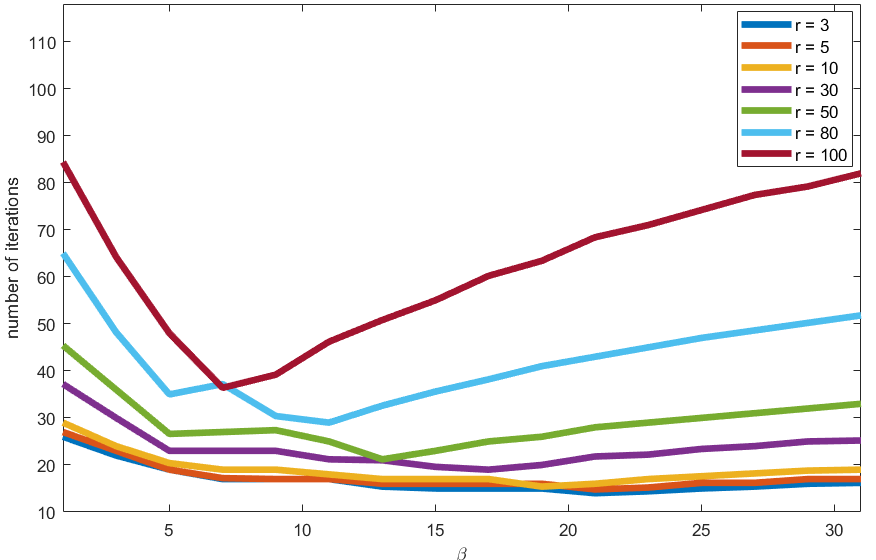}
	\caption{ {\small Optimal value for $ \beta $ with $ n = 1000 $, $ r \in \left\lbrace 3, 5, 10, 30, 50, 80, 100 \right\rbrace $, $ \epsilon_{\rho} = 10^{-5} $ and $ p = 50\% $.}}
	\label{figura3}
\end{figure}

%\dg{Conclusão? Com qual $\beta$ vamos para os experimentos com dados sintéticos? E para os experimentos com o MovieLens?}

\subsection{Experiments with synthetic data}\label{sec:random}
%\dg{$\beta = ??, w = ??, it_{\max} = ??$}
%In our experiments, we generated $ n \times n $ matrices of rank $ r \ll n $ of the form $ A = MN \in \mathbb{R}^{n \times n} $, where the entries of $ M \in \mathbb{R}^{n \times r} $ and $ N \in \mathbb{R}^{r \times n} $ are sampled i.i.d from the standard normal distribution. In all simulations, we deleted, uniformly at random, a percentage $ p $ of entries (unobserved entries) of $ A $ and ...

Now we turn our attention to experiments with synthetic data, generated as described in the beginning of Section~\ref{sec:numexp}. 
For these experiments, we set $ n= 1000 $, $ p = 40\% $ and the rank $r$ takes values in the set $ \left\lbrace 10, 15, 20, 40, 80, 100 \right \rbrace $. 
In the stopping criteria, we used the tolerances $ \epsilon_{\rho} =\epsilon_{1}=\epsilon_{2}= 10^{-4} $, $ \epsilon_{3} = 10^{-3} $, $ \epsilon_{\lambda}=10^{-6} $  and for SVT, following \cite{cai2010singular}, we fixed $ \tau = 5n $ and $ t_k = 1.2n^{2}/ \lvert \Omega \rvert $, where $\lvert\Omega\rvert$ is the cardinality of $ \Omega $. For FPC we have used the standard strategy to update the regularization parameter: $ \lambda_{0} = \lVert P_{\Omega}(A) \rVert_{2} $, $ \lambda_{k}=\max\left \lbrace 0.25\lambda_{k-1}, 0.01 \right\rbrace  $, and $t_k = 1.99$, as recommended in \cite{ma2011fixed}. 
In Algorithm~\ref{alg:main}, we set the maximum number of iterations of phase one as $w=500$ as well as the iteration budget for phase two $it_{\max} = 500$. 
For the acceleration parameter $\beta$ we have used $\{13,13,12,10,5,5\}$, respectively (following the study of Section~\ref{sec:beta}).

For performance evaluation, we use the relative error, defined by  $ \text{Rer}= \lVert A - \tilde{A} \rVert_{F}/\lVert A \rVert_{F}$, 
where $ \tilde{A} $ is the recovered matrix and $A$ is the target one. The experimental results are averaged over $ 5 $ repetitions.

Results are shown in Table \ref{tabela1}, where $r$ denotes the target rank, IT is the total number of iterations 
(for Algorithm~\ref{alg:main}, it is the sum of iterations of the two phases) and $t(s)$ the time in seconds. In this first set of experiments we point out that all algorithms recovered correctly the underlying rank.
As can be seen, our algorithm converges faster than the other algorithms. Furthermore, the bigger is the rank $ r $ of the desired matrix the better is the performance of Algorithm~\ref{alg:main}, when compared with FRSI which, in its turn, is consistently faster than SVT and FPC. 
In terms of relative error Algorithm~\ref{alg:main} was always the first or the second best.

We point out that most of the iterations of Algorithm~\ref{alg:main} correspond to phase one (warm-start) iterations. 
After it switches to the second phase, only a few more iterations are required to reach the stopping criteria. 
On average, for this set of experiments, the number of phase two iterations is less than 10.

\begin{table}
	\caption{Comparison of Algortihm~\ref{alg:main}, FRSI, SVT, and FPC. Performance evaluation for $ n= 1,000 $, $ p = 40\% $, $ r $ takes values in the set  $ \left\lbrace 10, 15, 20, 40, 80, 100 \right \rbrace $ and $\beta \in \{13,13,12,10,5,5\}$, respectively.}\label{tabela1}
	\centering
	\scriptsize 
	\begin{tabular}{cc}
		\begin{tabular}{c|c|ccc}
			\hline
			$ r $  & method & IT    & t(s)          & Rer  \\ \hline
			\multirow{4}{*}{10}  & Alg.~\ref{alg:main} & \textbf{16} & \textbf{1.80}  & \textbf{5.84e-06} \\ & FRSI  & 18  & 2.04   & 1.68e-04  \\ & SVT & 43  & 5.78   & 1.09e-04  \\ & FPC & 74  & 9.68    & 1.70e-05  \\ \hline
			\multirow{4}{*}{15}  & Alg.~\ref{alg:main} & \textbf{18} & \textbf{1.77}  & \textbf{6.90e-06}  \\ & FRSI  & 20  & 2.15   & 1.49e-04  \\ & SVT & 47  & 5.82   & 1.07e-04  \\ & FPC & 81  & 13.04   & 1.72e-05  \\ \hline
			\multirow{4}{*}{20}  & Alg.~\ref{alg:main} & \textbf{18} & \textbf{1.80}   & \textbf{1.12e-06}  \\ & FRSI  & 21  & 2.89   & 1.95e-04  \\ & SVT & 51  & 6.60   & 1.13e-04  \\ & FPC & 91  & 15.62   & 1.78e-05  \\ \hline	
		\end{tabular} & 
		\begin{tabular}{c|c|ccc}
			\hline
			$ r $  & method & IT    & t(s)          & Rer  \\ \hline
			\multirow{4}{*}{40}  & Alg.~\ref{alg:main} & \textbf{25} & \textbf{2.77}  & \textbf{1.63e-06}  \\ & FRSI  & 28  & 3.85   & 2.90e-04  \\ & SVT & 64  & 11.15  & 1.26e-04  \\ & FPC & 125 & 56.49   & 1.83e-05  \\ \hline
			\multirow{4}{*}{80}  & Alg.~\ref{alg:main} & \textbf{31} & \textbf{7.08}  & 4.76e-05 \\ & FRSI  & 42  & 10.03  & 5.71e-04  \\ & SVT & 93  & 34.44  & 1.47e-04  \\ & FPC & 212 & 165.98  & \textbf{2.04e-05}  \\ \hline
			\multirow{4}{*}{100} & Alg.~\ref{alg:main} & \textbf{38} & \textbf{12.26} & 5.42e-05  \\ & FRSI  & 46  & 20.51  & 1.21e-04  \\ & SVT & 144 & 68.41  & 1.76e-04  \\ & FPC & 361 & 415.25  & \textbf{2.38e-05}  \\ \hline	
		\end{tabular}
	\end{tabular}
\end{table}

We also performed experiments on larger matrices with very few observed entries. The experiments were conducted under the same parameters as before and we set up a time limit of one hour. We compare the results only with SVT, because, in this case, it is faster than FRSI and FPC algorithms. 
The results are displayed in Table~\ref{tabela4}, and it can be seen that both algorithms have competitive performance for the tested cases. 
In this table we also report an additional column with the recovered rank $\hat{r}$.

Algorithm~\ref{alg:main} usually outperforms SVT in terms of relative error and it is faster for matrices with higher rank. 
%SVT tends to show a better performance as the rank of the target matrix decreases and the number of known entries is reduced.
SVT tends to show a better performance for smaller ranks and when the number of missing entries is not too high. 
However, it becomes considerably slow when the rank increases and the percentage of known entries decreases. 
For some cases, such as (1000,20,90\%) and (10000,40,97\%), we even had to switch to the conservative choice of $ t_{k} = 1.99 $, 
for which SVT has theoretical convergence guarantees, rather than $ t_{k} = 1.2n^{2}/ \lvert \Omega \rvert $, to avoid exceed the time limit. 
Furthermore, we remark that the rank $\hat{r}$ of the matrix recovered by SVT can be higher than the rank of the original matrix, whereas Algorithm~\ref{alg:main} recovered a matrix with correct rank for this set of experiments.
%Results with (*) indicate these cases. 
%The average ranks outputted by SVT were:  [(1000,\textbf{20},90\%), r=\textbf{168}; (2000,\textbf{20},92\%), r=\textbf{31}; (5000,\textbf{25},96\%), r=\textbf{50}; (10000,\textbf{40},97\%), r=\textbf{95}].

%We also performed experiments on larger matrices with very few observed entries. 
%The experiments were conducted under the same parameters as before and we set up a time limit of one hour. 
%We compare the results only with SVT, because, in this case, it is faster than FRSI and FPC algorithms. 
%The results are displayed in Table \ref{tabela4}. 
%As can be seen, Alg.~\ref{alg:main} is usually the best in term of relative error. Furthermore, for matrices of higher rank, Alg.~\ref{alg:main} is significantly faster than SVT. 

\begin{table}
	\caption{Comparison of Algorithm~\ref{alg:main} and SVT for different values of $ \left(n,r,p\right) $ and $\beta \in \{13,12,19,12,19,12,19,10\}$, respectively.}\label{tabela4}
	\centering
	\small
	%	\begin{tabular}{cc}
	\begin{tabular}{c|c|cccc}
		\hline
		$ \left(n,r,p\right) $  & method & IT    & t(s)          & Rer  & $\hat{r}$\\ \hline
		\multirow{2}{*}{(1000,10,90\%)}  & Alg.~\ref{alg:main} & \textbf{116} &  5,45  & \textbf{1.36e-04} & 10 \\ 
		& SVT & 174  & \textbf{5.26}   & 1.44e-04  & 10  \\ \hline
		\multirow{2}{*}{(1000,20,90\%)}  & Alg.~\ref{alg:main} & \textbf{102} & \textbf{7.34}   & 3.25e-01 & 20 \\ 
		& SVT & 500  & 279.56   & \textbf{2.23e-01} &  168  \\ \hline
		\multirow{2}{*}{(2000,10,90\%)}  & Alg.~\ref{alg:main} & 86 &  12.54  & \textbf{3.68e-05} & 10 \\ 
		& SVT & \textbf{83}  & \textbf{9.55}   & 1.39e-04 & 10 \\ \hline	
		\multirow{2}{*}{(2000,20,92\%)}  & Alg.~\ref{alg:main} & \textbf{147} & \textbf{28.11}   & 1.59e-04 & 20 \\ 
		& SVT & 262  & 168.62   & \textbf{1.51e-04} &  31 \\ \hline	
		%\end{tabular} %& 
		%\begin{tabular}{c|c|ccc}
		%	\hline
		%	$  \left(n,r,p\right)  $  & method & IT    & t(s)          & Rer  \\ \hline
		\multirow{2}{*}{(5000,10,90\%)}  & Alg.~\ref{alg:main} & 69  & 63.4  & \textbf{2.36e-05} & 10 \\ 
		& SVT & \textbf{53}  & \textbf{33.7}  & 1.18e-04 & 10 \\ \hline
		\multirow{2}{*}{(5000,25,96\%)}  & Alg.~\ref{alg:main} & \textbf{215} & \textbf{149.89}  & \textbf{1.62e-04}  & 25 \\ 
		& SVT & 297  & 1355.23  & 2.34e-04 & 50 \\ \hline
		\multirow{2}{*}{(10000,10,90\%)} & Alg.~\ref{alg:main} & 65 & 245.13 &  \textbf{8.27e-06} & 10 \\ 
		& SVT & \textbf{43} & \textbf{113.84}  & 1.07e-04 & 10  \\ \hline	
		\multirow{2}{*}{(10000,40,97\%)} & Alg.~\ref{alg:main} & \textbf{256} & \textbf{1018.56} &  \textbf{8.01e-04} & 40 \\ 
		& SVT & 677 & 3600  & 4.13e-02  & 95 \\ \hline	
	\end{tabular}
	%	\end{tabular}
\end{table}

%Finally, we observe that SVT has good performance when the rank of the target matrix is very low and the number of known entries is quite small, but it becomes  considerably slow when the rank is increased or the matrix has many known entries. In these cases, we observed that the rank of the recovered matrix can be much bigger than the rank of the original matrix. Results with (*) indicate this cases. \tred{We point out that for this last set of experiments we had to switch to the conservative choice of $ \delta =1.99 $, for which the algorithm has theoretical convergence guarantees, rather than $ \delta = 1.2n^{2}/ | \Omega | $, due to divergence issues.}

\subsection{Experiments on \textit{MovieLens} data set}\label{sec:ML}

The \textit{MovieLens} data set is a well-known recommender system that is often used in matrix completion experiments \cite{yao2018}. It contains ratings ($ \left\lbrace 1, 2, 3, 4, 5 \right\rbrace $) of different users on movies. Table~\ref{tabela2} contains the data sets used in the experiments.

\begin{table}
	\caption{\textit{MovieLens} data sets used in the experiments}
	\centering
	\begin{tabular}{c|c|c|c}
		\hline
		data set &  \# users & \# movies & \# ratings \\ \hline
		MovieLens-100k & 943 & 1,682 & 100,000 \\
		MovieLens-1M & 6,040 & 3,952 & 1,000,209 \\
		\hline
	\end{tabular}\label{tabela2}
\end{table}

We randomly deleted $ 50\% $ percent of the observed ratings and for performance evaluation we use the root mean square error (RMSE) given by  
$$
RMSE = \sqrt{\rVert P_{\hat{\Omega}} (A-\tilde{A}) \lVert_{F}^{2}/ \lvert \hat{\Omega} \rvert},
$$
where  $\hat{\Omega}$ is total number of observed ratings (but only $\lvert\Omega\rvert = \lvert \hat{\Omega} \rvert/2$ ratings were passed as input to the algorithms). 

Since the ratings matrix has unknown rank and both Algorithm~\ref{alg:main} and FRSI need this information, we performed some experiments for different values of $ r $ and we set  $ r = 130 $ for 	MovieLens-100k and $ r=340 $ for MovieLens-1M because these choices provide the smallest RMSE for both methods. 
For Algorithm~\ref{alg:main}, we fixed the accelaration parameter $\beta=2$. 
In these experiments, we set the tolerances $ \epsilon_{\rho} =\epsilon_{1}=\epsilon_{2}=\epsilon_{3} = 10^{-3} $, $\epsilon_{\lambda}=10^{-2}$, and for SVT we fixed $ t_{k} = 1.99 $ and since $ m \neq n $ we set $ \tau = 8\sqrt{mn} $ as suggested in \cite{cai2010singular}. Moreover, we set up a time limit of one hour for all the algorithms.

The results are shown in Table \ref{tabela3}. As we can see, Algorithm~\ref{alg:main} shows the best performance in terms of CPU time and RMSE. 
We remark that for the dataset MovieLens-1M, Algorithm~\ref{alg:main} was the only one able to reach the stopping criteria in less than one hour. 

\begin{table}
	\caption{Numerical results on \textit{MovieLens} data sets}
	\centering
	\begin{tabular}{c|c|c|c|c|c|c}
		\hline
		\multirow{2}{*}{method}	         & \multicolumn{3}{c|}{100k} & \multicolumn{3}{c}{1M} \\  \cline{2-7}
		& IT        & t(s)    & RMSE     & IT         & t(s)     & RMSE  \\ \hline
		Alg.~\ref{alg:main}   & \textbf{84}        &  \textbf{61.43}  & \textbf{0.7667}   & \textbf{74}         & \textbf{967.76}   & \textbf{0.7123}  \\
		FRSI   & 223       &  159.62 & 0.8598   & 176        & 3,600     & 0.8475  \\
		SVT    & 2,000      &  1,315.92 & 0.7696  & 1236       & 3,600       & 0.7230  \\
		FPC    & 410       & 978.20 & 0.7806    & 234        & 3,600       & 0.7929 \\
		\hline
	\end{tabular}\label{tabela3}
\end{table}

\section{Conclusion}\label{sec:fim}

%Matrix completion aims to recover a low-rank matrix from a small number of its entries. In this paper,  we propose a heuristic approach to compute a suitable value of regularization parameter of the accelerated Soft-impute algorithm through truncated singular value decomposition (SVD) by assuming that we know the rank of the target matrix in advance. We propose an acceleration that improves the rate of convergence, both theoretically and empirically. The proposed algorithm can recover matrices from a few percentage of its entries with very high accuracy in a few seconds and has good results in real-world problems.

We consider matrix completion problems where the rank of the target matrix is known in advance. 
For instance, this is the case of localization, graph realization and other problems in distance geometry \cite{liberti2014euclidean} where the rank of the matrix to be completed is related to the embedding dimension.

We revisited a Fixed Rank Soft-Impute (FRSI) heuristic and, by analyzing the operator defining its iteration, we shown that the generated sequence is quasi-Fejér convergent to ${\cal X}^*$, under a strong assumption on the behavior of the underlying singular values. 
Nevertheless, regardless of this assumption, an accelerated version of FRSI can still be helpfull as a warm-start phase for an accelerated Soft-Impute algorithm aimed to solve a nuclear norm regularized least-squares problem. 
This idea gave rise to a two-phase rank-based algorithm (Algorithm~\ref{alg:main}) which takes into account the rank information in the heuristic of the first phase to estimate the nuclear norm regularization parameter and provide a warm starting-point to an accelerated Soft-Impute algorithm at the second phase.

After a numerical study on how to tuning parameters of the first phase, numerical experiments on both synthetic and real data sets indicates that the proposed algorithm (Alg.~\ref{alg:main}) outperforms the previous heuristic FRSI \cite{moreira2018novel} and is competitive with well established algorithms for matrix completion, such as SVT and FPC. Moreover, Algorithm~\ref{alg:main} was able to recover  low-rank matrices from a few percentage of its entries with reasonable accuracy and faster than the compared methods, mainly when the expected rank is not too low.

Even though Algorithm~\ref{alg:phase1} (phase-one) is just an heuristic, it was responsible for the majority of the iterations of Algorithm~\ref{alg:main}.
This fact points in the direction of studying convergence properties of phase-one alone under weaker assumptions yet to be discovered.

%\section{Bibliography}
\bibliographystyle{ieeetr}
\bibliography{Bibliografia}

\appendix
\section{Soft-Impute as a proximal gradient method}\label{sec:sipg}
Here we show that Soft-Impute is a particular case of proximal gradient applied to problem (\ref{nnm}) with constant step-size. 
The optimization problem given by equation (\ref{nnm}) is a particular case of the problem of minimizing composite functions of the form
\begin{equation}\label{comp_func}
	\minimize_{x}  \quad g(x) + h(x) , 
\end{equation}
where $ g ,h $ are convex functions with $ g $ differentiable, having Lipschitz gradient $\nabla g$ with constant $L>0$ ($h$ does not need to be smooth, only proper convex).

%A well-known method for solving (\ref{comp_func}) is to consider the quadratic approximation for $g(x)$ at a given point $ y $ as follows

%\begin{equation}\label{qapro_func}
%	Q_{t}(x,y)=g(y)+ \nabla g(y)^{T}(x-y)+\dfrac{1}{2t}\lVert x-y \rVert_{2}^{2} +h(x), \ t>0
%\end{equation}
%
%Since (\ref{qapro_func}) is a strongly convex function of $ x $, it has a unique minimizer which we denote by
%\begin{equation}\label{minimizer}
%	\text{p}_{t}(y)=\argmin_{x}\left\lbrace Q_{t}(x,y)\right\rbrace 
%\end{equation}
%
%By simple algebra and ignoring the constant terms we have
%
%\begin{equation}\label{pt}
%	\text{p}_{t}(y)=\argmin_{x}\left\lbrace \dfrac{1}{2t}\lVert x-\left(y-t\nabla g(y)\right) \rVert_{2}^{2} +h(x) \right\rbrace. 
%\end{equation}
%
%\begin{definition}

%The proximal operator $ prox_{th}:\mathbb{R}^{n} \longrightarrow \mathbb{R}^{n} $ of the scaled function $ th $, where $ t > 0 $, is defined by 
%
%\begin{equation}\label{prox_op}
%	\text{prox}_{th}(v)=\argmin_{x}\left\lbrace \dfrac{1}{2t}\lVert x-v \rVert_{2}^{2} +h(x) \right\rbrace
%\end{equation}
%\end{definition}

Problem~\eqref{comp_func} can be solved by the proximal gradient algorithm, which generates a sequence $ \left\lbrace x^{k} \right\rbrace  $ given by
\begin{equation}\label{pga}
	x^{k+1}=\text{prox}_{th}\left(x^{k}-t\nabla g(x^{k})\right),
\end{equation}
where $ t>0 $ and $ \textrm{prox}_{th} \left(\cdot\right) $ is the proximal operator, which can be expressed as 
$$ \textrm{prox}_{th}(v) =\argmin_{x}\left\lbrace \dfrac{1}{2t}\lVert x-v \rVert_{2}^{2} +h(x) \right\rbrace. $$
It is shown (see Theorem 3.1 in \cite{fista}) that either for a fixed stepsize $ t \leq \dfrac{1}{L} $ or by a backtracking line search, the proximal algorithm converges to the optimal solution of  (\ref{comp_func}) at a rate of $O(1/k)$, where $ k $ is the number of iterations. 

For the function $ h(X) = \lambda \lVert X \rVert_{\ast} $, the proximal operator is defined as
$$	
\textrm{prox}_{th}(M)=\argmin_{X}\left\lbrace \dfrac{1}{2t}\lVert M-X \rVert_{F}^{2} + \lambda \lVert X \rVert_{\ast} \right\rbrace,
$$
whose solution is given by (see Theorem~2.1 in \cite{cai2010singular} with $ \tau=\lambda t$)
\begin{equation}\label{prox_soft}
	\textrm{prox}_{th}(M) =  S_{\lambda t}\left( M\right).
\end{equation}

In order to show that SI is a proximal gradient algorithm applied to problem~\eqref{nnm}, let us re-write iteration \eqref{eq:si} equivalently as 
\begin{align*}
	Y^{k} & = X^{k}+P_{\Omega}(A-X^{k})= P_{\Omega}(A) + P_{\Omega}^{\perp} (X^{k}) \\
	X^{k+1} & = S_{\lambda}(Y^{k}).
\end{align*}
For problem~\eqref{nnm}, observe that $ g(X) =  \dfrac{1}{2}\lVert P_{\Omega}(A)- P_{\Omega}(X)\rVert_{F}^{2} $, and thus $ L=1 $. 
Since 
$$ X^{k} - \nabla g(X^{k}) = X^{k} - (P_{\Omega}(X^{k}) - P_{\Omega}(A))  = P_{\Omega}(A) + P_{\Omega}^{\perp} (X^{k}) =: Y^{k}, $$
from (\ref{pga}) and (\ref{prox_soft}) with $ t =1 $ we have $ X^{k+1} =\textrm{prox}_{h}(Y^{k}) =  S_{\lambda }\left( Y^{k}\right)  $, which gives the result. 

One can accelerate the proximal gradient method to achieve the optimal convergence rate of $ O(1/k^{2}) $ by setting the equations \cite{fista, parikh2014proximal}
\begin{equation}\label{acpro}
	\begin{aligned}
		x^{k+1} & = \textrm{prox}_{th}\left(z^{k}-t\nabla g(z^{k})\right) \\
		z^{k+1} & = x^{k+1}+\frac{k-1}{k+2}\left(x^{k+1}-x^{k}\right) 
	\end{aligned}
\end{equation}

Therefore, in the same way Soft-Impute corresponds to a proximal gradient method with fixed step-size $t=1$, 
Algorithm~\ref{alg:phase2} corresponds to an accelerated proximal gradient, for which the convergence is well-studied in the literature \cite{parikh2014proximal}.

\end{document}